\newcommand{\Para}{\mathcal{P}}
\newcommand{\Quasi}{\mathcal{Q}}
\newcommand{\Family}{\mathcal{F}}
\newcommand{\Pow}{3}
\newcommand{\Torus}{\mathbb{T}^2}
\newcommand{\wind}{\mathcal{W}}
\newcommand{\SkewP}{\mathcal{S}}
\newcommand{\Dioph}{\mathcal{D}}
\newcommand{\Brun}{\mathcal{B}}
\begin{document}
\title{\vspace{-1cm}Almost-sure quasiperiodicity in countably many co-existing circles.\vspace{0cm}}
\author{Suddhasattwa Das\footnotemark[1]}
\footnotetext[1]{Courant Institute of Mathematical Sciences, New York University}
\maketitle

\begin{abstract} In many dynamical systems, countably infinitely many invariant tori co-exist. The occurrence of quasiperiodicity on any one of these tori is sometimes sufficient to establish strong global properties, like dense trajectories and periodic points. In this paper, we establish sufficient conditions for a countably infinite collection of parameterized circle diffeomorphisms to have quasiperiodic behavior on at least one of the circles, for a full Lebesgue measure set of the parameter values. As an application, we study parameterized families of skew-product maps on the torus and prove sufficient conditions for the existence of at least on quasiperiodic circle for Lebesgue-almost every parameter value. 
\end{abstract}

\section{Introduction and main results}

In this paper, the unit circle $S^1$ will be identified as $\real /\num $, and $proj:\real \to\num $ is the associated quotient map. A homeomorphism of the circle $f:S^1\to S^1$ can be lifted to a map $\bar{f}:\real \to\real $ under the covering map $proj$. It is well known (see for example, \cite{HermanSeminal}) that the following limit exists and is a constant independent of $z$. 
\begin{equation}\label{eqn:rot_num_defn}
\rho(f):=\underset{n\to\infty}{\lim}\frac{\bar{f}(z)-z}{n}
\end{equation}
This limit is called the \textbf{rotation number} of $f$. The rotation number is of fundamental importance in inferring the properties of the map and its limit points. If the rotation number is rational of the form $\frac{p}{q}$, then all points on $S^1$ is in the basin of attraction of some $q$-periodic point. On the other hand, if $\rho\notin\mathbb{Q}$, then $f$ has the rotation $\theta\mapsto\theta+\rho\bmod 1$ as a factor map. In fact, a homeomorphism of the circle is conjugate to an irrational rotation iff it is transitive (see \cite{TransCircleHomeo}). Dynamics on a torus $\mathbb{T}^d$ or on $S^1$ which are conjugate to an irrational rotation are called ``quasiperiodic dynamics". They have interesting properties like transitivity, non-mixing, unique ergodicity, zero Lyapunov exponents etc. This paper establishes sufficient conditions under which a countably infinite family of invariant circles will have quasiperiodicity on at least one circle, for Lebesgue almost-every value of a parameter $t$. 

The existence of a single invariant curve/manifold, on which the dynamics is transitive, can often lead to strong global properties. See for example blenders in \cite{Blender1, Blender2, Blender3}, or \cite{DasJim1} discussed in more details later. The main theorem will be applied to obtain sufficient conditions for the existence of an invariant quasiperiodic curve in a skew-product map on the torus, for Lebesgue-a.e. value of a parameter.

An orientation preserving circle homeomorphism $F$ is of the form given below.
\begin{equation}\label{eqn:lift}
F(z)=z+g(z) \bmod 1
\end{equation}
where $g:\real \to\real $ is called the \textbf{periodic part} of the map $F$, and is periodic and of the same smoothness class as $F$. We are interested in parameterized families of $C^\Pow$ circle diffeomorphisms, parameterized by a parameter $t\in [0,1]$, which can be written similar to Eq. \myref{eqn:lift} in the following manner.
\begin{equation}\label{eqn:lift_family}
f_t:\theta\mapsto\theta+Nt+g_t(\theta)\bmod 1
\end{equation}
where $N\in\mathbb{N}$ is called the \textbf{t-winding number} of $f$ and is denoted as \boldmath$\wind(f)$\unboldmath. The choice of the $t$-winding number is not unique, but we require that it must satisfy $|\partial_t g_t|<1$.

\textbf{Partition of the parameter space.} Given a parameterized family $f_t$, define \boldmath $\Para(f_t)$ \unboldmath the set \{$t\in [0,1]$ : $f_t$ has a periodic point\}, and by \boldmath $\Quasi(f_t)$ \unboldmath the set \{$t\in [0,1]$ : $f_t$ is topologically conjugate to an irrational rotation\}. 
By Denjoy's theorem \cite{DenjoyC2}, a $C^3$ circle diffeomorphism with an irrational rotation number $\rho$ is topologically conjugate to the rotation $\theta\mapsto\theta+\rho\bmod 1$. Therefore,
\[[0,1]=\Para(f_t)\sqcup \Quasi(f_t).\]

\textbf{Arnold tongues and periodic windows.} If for some parameter value $t_0$, $f_{t_0}$ has a stable periodic orbit with period $n$, then $\rho(f_t)$ is constant and $=k/n$ for $t$ in some neighborhood of $t_0$. These intervals over which $\rho(f_t)$ is constant are called periodic windows. The dependence of the rotation number on the parameter $t$ has been studied for over 50 years, see for example \cite{HermanSeminal, Arnold} etc. Arnold, in the seminal paper \cite{Arnold} studied the family
\[f_{t,\delta}:\theta\to\theta+t+\delta\sin(2\pi \theta)\bmod 1\]
and proved that $\mu(\Para(f_t))\to 0$ as $\delta\to 0$ and $\mu(\Para(f_t))\to 1$ as $\delta\to 1/2\pi$. In his example, each of the countably infinitely many periodic windows shrink in width to a point at $\delta=0$, as $\delta\to 0$, and monotonically thicken as $\delta$ is increased. The bifurcation diagrams of these windows with $\delta$ are called ``Arnold tongues'' because of their shape. The scaling laws of their width with parameter $\delta$ has been studied extensively in the general setting of $f_t\in\Family$, in which $\sin(2\pi \theta)$ is replaced by some general periodic function $g_t(\theta)$. Brunovsky  proved that there is a set of 1-parameter circle diffeomorphisms $\Brun$ which is residual in the family of $C^3$-circle diffeomorphisms, such that for $\forall f_t\in\Brun$, the periodic windows contain an open, dense set in the parameter space $[0,1]$. See Proposition 3 from \cite{Brunovsky3}

\textbf{Universality of Arnold tongues.} Many universal properties of Arnold tongues have been observed in general families of the form Eq. \myref{eqn:lift_family}. Cvitanovic et. al. \cite{ScalingTongue1} ordered the tongues based on the ``Farey-sequence'' ordering of the rationals and for all fixed values of $t$, found asymptotic scaling laws wrt their number in this ordering. Jonker \cite{ScalingTongue2} proved a $q^{-3}$ scaling law, where $q$ is the denominator of the rotation number. The differentiability properties of the boundaries of the Arnold tongues and their angle of contact at $\delta=0$ has been studied in \cite{Kuntal1, Kuntal2}. Our main theorem (Theorem \myref{thm:A}) says that if a countably infinite family of parameterized circle diffeomorphisms satisfy certain conditions, then for Lebesgue-a.e. $t\in[0,1]$, the intersection of the Arnold tongues has measure zero, regardless of their scaling laws and thickness. In Corollary \myref{corr:B}, we use this theorem to prove the existence of quasiperiodic circles in an open family of torus maps. 

Define \boldmath $\|f_t\|_{\Family}$ \unboldmath :=$\max\left( \|g_t\|_{C^\Pow}, \left|\frac{\partial}{\partial t} g_t\right|_{C^0}\right)$. Note that $\|\frac{\partial}{\partial t} g_t\|_{C^0}$=$\max$\{ $\left|\frac{\partial}{\partial t} g_t(\theta)\right|$ : $t\in[0,1]$, $\theta\in S^1$ \}. $\|g_t\|_{C^3}$ denotes the $C^\Pow$ norm of $g_t$ as a function of $\theta$.

Let \boldmath $\Family$ \unboldmath denote the following set of parameterized circle diffeomorphisms. 
\[\Family:=\{f_t\in\Brun \mbox{ for } t\in [0,1]\ :\ \sup_{t,\theta} \left|\|f_t\|_{\Family}\right| < 1 \}.\]

We will denote by \boldmath $\mu$ \unboldmath the 1-dimensional Lebesgue measure. 

\begin{theorem}[Main theorem]\label{thm:A}
Let $(f_{n,t})_{n\in\mathbb{N}}$ be a sequence in $\Family$ and written in the form Eq. \myref{eqn:lift_family}. Suppose that the following two conditions are satisfied.
\\(A1) $\underset{n\to\infty}{\lim\sup}\wind(f_{n,t})=\infty$.
\\(A2) $\OpA{\sup}{n\in\num}\|f_{n,t}\|_{\Family}<1$.
\\Let $\Para:=\underset{n\in\mathbb{N}}{\cap}\Para(f_{n,t})$. Then if $Int(\Para)=\emptyset$, then $\Para$ has zero Lebesgue measure.
\end{theorem}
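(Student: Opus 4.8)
The plan is to prove the stronger, unconditional statement that $\mu(\Para)=0$ whenever (A1) and (A2) hold; the implication as phrased follows a fortiori. Everything rests on one uniform \emph{local} estimate, which I would establish first: there should be a constant $c_0\in(0,1)$, depending only on the quantity $\OpA{\sup}{n\in\num}\|f_{n,t}\|_{\Family}$ from (A2), such that for every $f\in\Family$ written as in \myref{eqn:lift_family}, with $t$-winding number $N=\wind(f)$, and every subinterval $I\subseteq[0,1]$ with $N\,\mu(I)\ge1$,
\[
\mu\bigl(\{\,t\in I:\rho(f_t)\notin\mathbb{Q}\,\}\bigr)\ \ge\ c_0\,\mu(I).
\]
Once this is available the theorem is immediate: given any subinterval $I$, use (A1) to pick $n$ with $\wind(f_{n,t})\ge 1/\mu(I)$; since $\Para\subseteq\Para(f_{n,t})=[0,1]\setminus\Quasi(f_{n,t})$, the estimate gives $\mu(\Para\cap I)\le(1-c_0)\mu(I)$. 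As $I$ was arbitrary, the Borel set $\Para$ meets every interval in at most a $(1-c_0)$-fraction of its length, and such a set is Lebesgue-null --- otherwise a density point of $\Para$ would lie in an interval $I$ with $\mu(\Para\cap I)>(1-\frac{c_0}{2})\mu(I)$, contradicting the Lebesgue density theorem.

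To prove the local estimate I would reduce it, by an affine change of the parameter $t$, to the base case $N=1$, $I=[0,1]$: that $\mu(\Quasi(f_t))\ge c_0$ for every $f_t\in\Family$ of winding number $1$. Given $I$ and $N$ with $N\mu(I)\ge1$, split a sub-interval of $I$ into $k:=\lfloor N\mu(I)\rfloor\ge1$ consecutive blocks of length $1/N$; on each block $[b,b+1/N]$ the substitution $s\mapsto b+s/N$ turns the family into one of winding number $1$, of the form $\tilde f_s(\theta)=\theta+s+\gamma_s(\theta)\bmod1$. The thing to verify is that $\tilde f_s$ again lies in $\Family$: the $\theta$-derivatives of orders $1,2,3$ of $\gamma_s$ equal those of $g_{n,\cdot}$, so have $C^0$-norm $<1$; $\partial_s\gamma_s=\frac1N\partial_t g_{n,\cdot}$ has $C^0$-norm $<1$; the only nuisance is the large additive constant in $\gamma_s$, but $\|g_t\|_{C^{\Pow}}<1$ forces its mean-zero part $h_t:=g_t-\int g_t$ to obey $\|h_t\|_{C^0}<\frac{1}{8}$ (iterate the Wirtinger-type inequality $\|\phi\|_\infty\le\frac12\|\phi'\|_\infty$ for mean-zero periodic $\phi$, using that $\phi'$ and $\phi''$ are again mean-zero), so after normalising the constant modulo $1$ into $[-\frac12,\frac12)$ the periodic part $\gamma_s$ has $C^0$-norm $<\frac{5}{8}$ and hence $C^{\Pow}$-norm $<1$; finally the Brunovsky condition $f_{n,t}\in\Brun$ (density of periodic windows) is preserved by a homeomorphic reparametrisation of $[0,1]$. (The modulo-$1$ normalisation jumps at most once inside a block, which a bounded further subdivision absorbs.) Summing the base-case bound over the $k\ge N\mu(I)/2$ blocks yields the local estimate with $c_0/2$ in place of $c_0$.

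For the base case I would invoke a classical KAM-type input: a one-parameter family of $C^{\Pow}$ circle diffeomorphisms that is a sufficiently small perturbation of the rigid-rotation family and depends monotonically on the parameter has a positive-measure set of parameters with Diophantine rotation number, the measure being bounded below uniformly in terms of the size of the perturbation (this is what lies behind Arnold's $\mu(\Para)\to0$ as the perturbation shrinks; see \cite{Arnold, HermanSeminal}). Membership in $\Family$ puts us squarely in this regime: by the estimates above $f_t=R_{t+\int g_t}\circ(\mathrm{id}+h_t)$ with $\|h_t\|_{C^{\Pow}}<1$ and in fact $\|h_t\|_{C^1}<\frac14$ uniformly in $t$, while $|\partial_t g_t|<1$ makes $\rho(f_t)$ non-decreasing in $t$. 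The genuinely hard step, I expect, is exactly this one: confirming that the size restriction $\|g_t\|_{C^{\Pow}}<1$ (together with $\|\partial_t g_t\|_{C^0}<1$) does fall inside the range of validity of a \emph{quantitative} positive-measure KAM estimate, in its parameter-dependent form; this is the only non-elementary ingredient, and it is where hypothesis (A2) is used --- all the rest is the density-point packaging and the bookkeeping of the reparametrisation. (As a side remark, (A1) on its own already forces $\mathrm{Int}(\Para)=\emptyset$, since a periodic window of $f_{n,t}$ has length $<1/(\wind(f_{n,t})-1)$, which tends to $0$ along the subsequence furnished by (A1); so the hypothesis $\mathrm{Int}(\Para)=\emptyset$ is automatically met and the real content of the theorem is the vanishing of the measure.)
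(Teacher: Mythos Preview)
Your architecture matches the paper's: both rest on a uniform local estimate $\mu(\Para(f_{n,t})\cap J)\le\eta\,\mu(J)$ for a suitably chosen $n$, obtained by affinely reparametrising $t$ over $J$ to reduce to the winding-number-one case (the paper's Proposition~\ref{lem:unif_bound_1} and Lemma~\ref{lem:renormalization}). Your endgame via the Lebesgue density theorem is cleaner than the paper's explicit inductive nesting (Lemma~\ref{lem:renormalization_2} and the $J_k$ construction), and your side remark that (A1) alone already forces $\mathrm{Int}(\Para)=\emptyset$ is correct --- it renders both the hypothesis and the paper's Lemma~\ref{lem:Para_empty_int} redundant, a genuine simplification the paper misses.

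The one real gap is in the base case. You frame the needed input as a ``KAM-type'' estimate and then ask whether the size restriction $\|g_t\|_{C^{\Pow}}<1$ falls inside its range of validity; in general it does not --- KAM smallness thresholds are far more restrictive than this, and your Wirtinger chain only controls $\|h_t\|_{C^0}$ and $\|h_t\|_{C^1}$, not the $C^{\Pow}$ norm that a KAM scheme requires. The paper handles this by a two-step argument: the small-$r$ regime is indeed KAM (Lemmas~\ref{lem:KAM_fund} and~\ref{lem:con_at_Id}), but to push the uniform bound all the way to every $r<1$ it invokes Herman's Proposition~6.2 from \cite{Herman1}, which gives $\mu(\Para(f_t))<1-\delta$ with $\delta$ depending only on $\|\partial_t g_t\|_{C^0}$ and with \emph{no} smallness assumption on $\|g\|_{C^{\Pow}}$. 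This is a separate, and deeper, theorem than KAM, and it is exactly what fills the hole you correctly flagged as ``the genuinely hard step.'' Once you substitute Herman's global result for the hoped-for KAM bound, your density-point argument finishes the proof --- and more economically than the paper's iterative construction.
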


\textbf{Remark.} In other words, given a countably infinite family of parameterized circle diffeomorphisms satisfying (A1)-(A2), there is a full measure set of $t\in I$ for which one of the following two holds.
\\(i) $\exists n\in\mathbb{N}$ for which $f_{n,t}$ is conjugate to an irrational rotation.
\\(ii) $t$ belongs to a common periodic window for all $f_{n,t}$-s. Generically, this occurs only if there is a curve $\Gamma$ transverse to all the vertical circles and invariant under $F^p$ for some $p\in\num$.

\textbf{Remark.} Condition (A2) is equivalent to saying that $\exists r\in (0,1)$ such that for $\forall n\in\num$, $\forall t\in [0,1]$, $\theta\in S^1$, $\|g_{n,t}\|_{C^3}<r$, $\left|\frac{\partial}{\partial t} g_{,t}(\theta)\right|$=$\left|\frac{\partial}{\partial t} f_{n,t}(\theta)/\wind(f_{n,t})-1\right|$ $<r$.

\textbf{Skew product maps.} Countable families of parameterized circle diffeomorphisms arise naturally in skew product maps. Let \boldmath $\SkewP$ \unboldmath denote the following family of skew-product maps on the torus.
\begin{equation}\label{eqn:map}
F_t:(x,y) = (mx,\ y+t+g_t(x,y))\ \mbox{mod 1 in each variable}.
\end{equation}
where $m>1$ is an integer, $t\in[0,1]$ is a parameter and $g_t:\Torus\to S^1$ is $C^0$, $C^\Pow$ with respect to $x$, $y$ respectively. For $\forall x_0\in S^1$, a \textbf{vertical circle}, \boldmath $S_{x_0}$ \unboldmath, is the set $\{(x_0,y)\ |\ y\in S^1\}$. Then vertical circles are mapped into vertical circles, i.e., for every $x\in S^1,\ F(S_x)=S_{F(x)}$. For each positive integer $p$, $x_0$ is fixed under the $p$-th iteration of the circle map $x\mapsto mx\ (\bmod\ 1)$ iff
\begin{equation}\label{eqn:periodic_x_coord}
x_0=\frac{k}{m^p-1}\ (\bmod\ 1)
\end{equation}
where $k$ is an integer. Each vertical circle $S_{x_0}$ with ${x_0}$ of the form (\myref{eqn:periodic_x_coord}) will be called a
\textbf{periodic circle} of the map. These circles will also be denoted as \boldmath $S_{k,p}$ \unboldmath. The torus map can be restricted to these circles to get a countable number of orientation preserving circle homeomorphisms.
\begin{equation}\label{eqn:restricted_map}
F_{k,n}:=F^n|S_{k,n}:S_{k,n}\to S_{k,n}
\end{equation}

Das et. al. \cite{DasJim1} proved that if at least one of the $F_{k,n}$-s is conjugate to an irrational rotation, then the map $F$ has strong mixing, has dense repellors, saddles and dense unstable manifolds of its saddles. In this paper we will prove that this hypothesis is satisfied by a family of torus maps $F_t:\Torus\to\Torus$ from Eq. \myref{eqn:map}, satisfying the following condition. for some $R>0$.

\boldmath $(A3)_R$ \unboldmath : There are infinitely many $ k,n\in\num $, $0<k<m^n$ for which $\|F_{k,n}|S_{k,n}-Id\|_{C^3y}<R$. \ ($F_{k,n}$, $S_{k,n}$ is as in Eq. \ref{eqn:restricted_map}.)

\begin{corollary}[Quasiperiodicity on skew product maps]\label{corr:B}
Let $R\in(0,1)$. Then there is a residual family of skew product maps of the form Eq. \myref{eqn:map}  satisfying $\left|\partial_t g_t\right|<R$ such that if $(A3)_R$ holds, then for Lebesgue-a.e. $t\in [0,1]$, either one of the following two conclusions hold.
\\(i) there is a circle $S_{k,n}$ on which the map $F_{k,n}$ from (\myref{eqn:restricted_map}) is quasiperiodic; OR
\\(ii) $F_t$ divides the rectangle into invariant cylinders (topologically $S^1\times[0,1]$ with at least one attracting boundary component.
\end{corollary}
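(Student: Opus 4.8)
The strategy is to view the corollary as an instance of Theorem \ref{thm:A} applied to the circle diffeomorphisms obtained by restricting $F_t$ to its periodic circles. Fix a base point $x_0=k/(m^n-1)$ of exact period $n$ for $x\mapsto mx$, write $\phi_{j,t}(y):=y+t+g_t(m^jx_0,y)$, and observe that $F_{k,n}=\phi_{n-1,t}\circ\cdots\circ\phi_{0,t}$ is an orientation preserving circle diffeomorphism whose lift has the form $y\mapsto y+nt+h_{k,n,t}(y)$, the periodic part $h_{k,n,t}$ being the sum of the $n$ quantities $g_t$ evaluated along the orbit of $y$ under $\phi_{0,t},\dots,\phi_{n-1,t}$; thus, in the bare normalisation, $\wind(F_{k,n})=n$. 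So each skew product Eq. \ref{eqn:map} produces a countable family $\{t\mapsto F_{k,n}\}$ of families of the shape Eq. \ref{eqn:lift_family}, indexed by the periodic circles Eq. \ref{eqn:periodic_x_coord}. I will choose the residual family $\mathcal G\subseteq\{F_t\in\SkewP:|\partial_t g_t|<R\}$ so that every such family $t\mapsto F_{k,n}$ lies in $\Family$, and then, assuming $(A3)_R$, feed the subfamily indexed by the pairs it supplies into Theorem \ref{thm:A}.

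\textbf{Constructing $\mathcal G$ and checking (A1)--(A2).} For each $(k,n)$ with $x_0$ of exact period $n$, the restriction map $\pi_{k,n}$ sending a skew product to the family $t\mapsto F_{k,n}$ is continuous, and is open and surjective onto the relevant space of one-parameter $C^\Pow$ circle-diffeomorphism families, because $g_t$ may be prescribed arbitrarily in a small neighbourhood of $S_{x_0}$ without touching the other circles in the orbit of $x_0$; hence $\pi_{k,n}^{-1}(\Brun)$ is residual in $\SkewP$. Let $\mathcal G$ be the intersection over all such $(k,n)$ of these sets, further intersected with the open set $\{|\partial_t g_t|<R\}$ and with a Kupka--Smale-type genericity condition (needed in the last step); being a countable intersection of residual sets, $\mathcal G$ is residual. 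Now suppose $F_t\in\mathcal G$ satisfies $(A3)_R$ and let $\mathcal A$ be the infinite set of pairs it supplies. Since $0<k<m^n$, for each $n$ only finitely many $k$ occur, so the periods $n$ appearing in $\mathcal A$ are unbounded; as $\wind(F_{k,n})=n$ this gives (A1). For (A2) one needs $\|F_{k,n}\|_{\Family}$ bounded away from $1$ uniformly over $\mathcal A$: the $C^\Pow$-in-$y$ part is exactly the bound in $(A3)_R$ (after choosing the lift so the periodic part has $C^0$-norm $<1$), and for the $\partial_t$ part one differentiates the $n$-fold composition, estimates the partial products $\prod_{i>j}\partial_y\phi_{i,t}$ by a distortion argument that exploits $(A3)_R$, $|\partial_t g_t|<R$ and the skew-product structure, and finally re-chooses $\wind(F_{k,n})$ to be the integer nearest the fibre-average of $\partial_t F_{k,n}$, so that $|\partial_t h_{k,n,t}|=|\partial_t F_{k,n}-\wind(F_{k,n})|<1$ (shrinking $R$ if needed). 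Thus each $t\mapsto F_{k,n}$, $(k,n)\in\mathcal A$, lies in $\Family$.

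\textbf{The dichotomy.} Put $\Para:=\bigcap_{(k,n)\in\mathcal A}\Para(F_{k,n})$ and apply Theorem \ref{thm:A}. If $Int(\Para)=\emptyset$ then $\mu(\Para)=0$, so for Lebesgue-a.e.\ $t$ there is a $(k,n)\in\mathcal A$ with $t\notin\Para(F_{k,n})$, i.e.\ $F_{k,n}$ has no periodic point; being a $C^\Pow$ diffeomorphism with irrational rotation number it is topologically conjugate to a rotation by Denjoy's theorem \cite{DenjoyC2} --- conclusion (i). If instead $Int(\Para)\neq\emptyset$, there is an open interval $J$ on which every $F_{k,n}$, $(k,n)\in\mathcal A$, sits in the interior of a non-degenerate periodic window, hence carries a hyperbolic attracting periodic orbit in its fibre over the periodic base orbit. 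Over the expanding base $x\mapsto mx$ these attracting fibre orbits propagate, by a graph-transform argument together with the Kupka--Smale genericity built into $\mathcal G$, to a finite collection of $C^\Pow$ curves transverse to all vertical circles and cyclically permuted by $F_t$, each normally hyperbolic and at least one normally attracting; their union cuts $\Torus$ into invariant cylinders $S^1\times[0,1]$, each with an attracting boundary component --- conclusion (ii). Since for a.e.\ $t$ one is either in $Int(\Para)$ (giving (ii)) or outside $\Para$ (giving (i)), and the genericity of $\mathcal G$ forces $\Para\setminus Int(\Para)$ to be negligible, the corollary follows.

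\textbf{Main obstacle.} The substantive step is the passage from ``$Int(\Para)\neq\emptyset$'' to the invariant-cylinder picture: one must show that countably many Arnold tongues --- those of $F_{k,n}$ over the unbounded family of periods $n$ --- can overlap on an open parameter set only when an honest $F_t$-invariant graph is present, which is why $\mathcal G$ must lie inside a residual set on which the tongue boundaries of distinct $F_{k,n}$ are in general position except when pinned together by such a graph, the normal-form/transversality input of the kind developed in \cite{DasJim1}. The secondary difficulty is the distortion estimate for (A2): the $n$-fold composition defining $F_{k,n}$ could a priori amplify both the $C^\Pow$-in-$y$ size and the fibre-oscillation of $\partial_t F_{k,n}$, and keeping $\|F_{k,n}\|_{\Family}$ below a fixed $r<1$ as the periods $n$ grow --- using only $(A3)_R$ and the $C^0$ bound $|\partial_t g_t|<R$ --- is where the estimates must be done carefully.
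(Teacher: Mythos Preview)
Your proposal follows the same overall route as the paper: restrict $F_t$ to the periodic circles $S_{k,n}$, check that on the pairs supplied by $(A3)_R$ the resulting families satisfy (A1)--(A2), invoke Theorem~\ref{thm:A}, and handle the $Int(\Para)\neq\emptyset$ alternative by a genericity argument producing invariant curves that cut the torus into cylinders.

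The one substantive divergence is your treatment of the $\partial_t$-half of (A2). You anticipate a distortion estimate on the partial products $\prod_{i>j}\partial_y\phi_{i,t}$ and even propose re-selecting $\wind(F_{k,n})$ to be the nearest integer to the fibre-average of $\partial_t F_{k,n}$. The paper does neither: it keeps $\wind(F_{k,n})=n$ throughout and simply writes
\[
\frac{\partial_t F_{(k,n),t}}{n}=1+\frac{1}{n}\sum_{i=0}^{n-1}\partial_t g_t(z_i),
\]
bounding the right-hand deviation from $1$ by $R$ directly from $|\partial_t g_t|<R$. (Your concern about chain-rule contributions through the $t$-dependence of the orbit points $z_i$ is reasonable, but the paper's argument does not engage with it.) Re-choosing the winding number, as you suggest, is unnecessary here and would force you to re-verify (A1). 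For the $C^3$-in-$y$ half of (A2) you agree with the paper: it is exactly the content of $(A3)_R$, since $\|g_{(k,n),t}\|_{C^3}=\|F_{k,n}|S_{k,n}-Id\|_{C^3y}<R$.

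Your explicit construction of the residual class $\mathcal G$ via the pullbacks $\pi_{k,n}^{-1}(\Brun)$ together with a Kupka--Smale condition is more careful than the paper, which simply asserts that ``for a $C^3$-residual family of maps'' a common periodic window forces saddles on each $S_{k,n}$ lying on finitely many closed invariant curves; but the substance of the two arguments for alternative (ii) is the same.
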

\begin{proof} We will only consider the $(k,n)$-s which satisfy $(A3)_R$. Let $x_0$ be of the form in Eq. \myref{eqn:periodic_x_coord} for some $k,n\in\mathbb{N}$. Then for $\forall y\in S^1$, $F^n_t(x_0,y)=(x_0,y+nt+\sum\limits_{i=0}^{n-1}g_t(z_{i}))$, where $z_i=F_t^i(\frac{k}{m^n-1},y)$. Thus the associated circle map is 
\[F_{(k,n),t}:\theta\mapsto\theta+nt+\sum\limits_{i=0}^{n-1}g_t(z_{i})\bmod 1.\]
Note that the $F_{(k,n),t}$-s form a countable collection of parameterized circle diffeomorphisms of the form (\myref{eqn:lift_family}). 

It will now be shown that assumptions (A1) and (A2) of Theorem \myref{thm:A} are satisfied by $F_{(k,n),t}$.

Note that $\wind(F_{(k,n),t})=n$, which $\to\infty$ as $n\to\infty$. Therefore, (A1) is satisfied.

Note that $\frac{\partial_t F_{(k,n),t}}{\wind(F_{(k,n),t})} =1+\frac{1}{n}\sum\limits_{i=0}^{n-1}\partial_t g_t(z_{i})$. 
\\Therefore, $\left|\frac{\partial_t F_{(k,n),t}}{\wind(F_{(k,n),t})}-1\right| \leq \frac{1}{n}\sum\limits_{i=0}^{n-1}|\partial_y g_t(z_{i})|\leq R$.

Moreover, if $F_{(k,n),t}(\theta)=\theta+nt+g_{(k,n),t}(\theta)$, then $\|F_{(k,n),t}\|_{\Family}$ = $\|g_{(k,n),t}\|_{C^3}$ = $\|F_{k,n}|S_{k,n}-Id\|_{C^3y}<R$. Therefore, (A2) is satisfied and the conclusion of Theorem \ref{thm:A} holds. Suppose the parameter $t$ lies in the interior of a common periodic window for all the circles $S_{k,n}$. For a $C^3$-residual family of maps, this implies that these circles have periodic saddles $x_{k,n}$, which behave as attracting fixed points for the circle maps $F_{k,n}$. Again, for a residual set of maps, this is only possible if these saddles lie on a countable collection of invariant curves, which are common unstable manifolds for these saddles. These curves must be closed and partition the torus into cylinders as claimed. \qed


\end{proof}

\textbf{Remark.} Kleptsyn and Nalskii \cite{StepSkew3} proved that under generic conditions, a.e. orbit of smooth maps in $\SkewP$ converge. In particular, this implies that for a.e. fibre $S_x$, the orbits of $(x,y_1)$, $(x,y_2)$ converge under the action of $F$ for a.e. $y_1,y_2\in S^1$. 
Corollary \myref{corr:B} therefore makes the non-trivial statement, that on a $0$-measure set of fibres, namely, the periodic circles in Eq. \myref{eqn:periodic_x_coord}, the map will act as a rotation for a.e. $t\in[0,1]$. Homburg \cite{Homburg} constructed families of skew products on the torus which are robustly topologically mixing and at the same time exhibits this fibre-wise contraction property.

\textbf{Notation.} For $\forall k\in\mathbb{N}$, \boldmath $[k]$ \unboldmath denotes the set \{$1,\ldots,k$\}. Let $f_t:[0,1]\times S^1\to S^1\in\Family$, \boldmath $\|f_t\|_{\Family}$ \unboldmath $:=\sup(\|\partial_t g_{k,t}\|_{C^0}, \|\partial_\theta g_{k,t}\|_{C^{2}}\}$. The abbreviation \textbf{WLOG} means ``without loss of generality''.
\section{Proof of Theorem \ref{thm:A}.}

We begin the proof by a simple consequence of the assumption that $Int(\Para)=\emptyset$.
\begin{lemma}\label{lem:Para_empty_int}
If $Int(\Para)=\emptyset$, then for every interval $ J\subset [0,1]$, there exists infinitely many $n$-s for which $\rho(f_{n,t})$ is not constant for $t\in J$.
\end{lemma}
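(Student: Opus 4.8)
The plan is to argue by contradiction, turning the statement into a claim about a \emph{finite} intersection of residual ``periodic window'' sets. Suppose the conclusion fails for some (necessarily nondegenerate) interval $J\subset[0,1]$; then there is a finite set $S\subset\mathbb{N}$ such that for every $n\notin S$ the rotation number $\rho(f_{n,t})$ is constant in $t$ on $J$, say $\rho(f_{n,t})\equiv c_n$ for $t\in J$. The first step is to observe that each such $c_n$ must be rational. Indeed, since $f_{n,t}\in\Brun$, Brunovsky's theorem (Proposition 3 of \cite{Brunovsky3}) guarantees that the union $W_n$ of the periodic windows of the family $f_{n,\cdot}$ is open and dense in $[0,1]$; in particular $W_n$ meets the open set $Int(J)$, so there is a parameter $t_0\in J$ at which $f_{n,t_0}$ possesses a periodic point and hence $\rho(f_{n,t_0})\in\mathbb{Q}$. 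Since $\rho(f_{n,t})$ is constant on $J$, we get $c_n=\rho(f_{n,t_0})\in\mathbb{Q}$. This is the only way the hypothesis $f_{n,t}\in\Brun$ is used for the indices $n\notin S$, and it is precisely what forbids $\rho(f_{n,t})$ from being irrational and constant on a subinterval.

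Next I would use the classical fact that an orientation-preserving circle homeomorphism with rational rotation number $p/q$ has a periodic point (of period $q$): applied to each $f_{n,t}$ with $t\in J$ and $n\notin S$, this gives $J\subseteq\Para(f_{n,t})$. For the finitely many remaining indices $n\in S$ I would only use the inclusion $W_n\subseteq\Para(f_{n,t})$, with $W_n$ again open and dense in $[0,1]$ by Brunovsky's theorem. Intersecting over all $n$ and restricting to $J$,
\[
\Para\cap J\;=\;\bigcap_{n\in\mathbb{N}}\bigl(\Para(f_{n,t})\cap J\bigr)\;\supseteq\;\Bigl(\bigcap_{n\in S}W_n\Bigr)\cap J,
\]
and since a finite intersection of open dense subsets of $[0,1]$ is again open and dense, the right-hand side contains a nonempty open subset of $[0,1]$. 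Hence $Int(\Para)\ne\emptyset$, contradicting the hypothesis, and the lemma follows.

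I do not expect a serious obstacle here: the argument uses none of the quantitative structure of $\Family$ — the winding numbers and the bounds in (A1)--(A2) play no role in this lemma and enter only in the later parts of the proof of Theorem~\ref{thm:A}. The one point needing a little care is the first step, where it is the density of the periodic \emph{windows} of each individual one-parameter family $f_{n,\cdot}$ (equivalently, already the density of $\Para(f_{n,t})$ guaranteed by membership in $\Brun$) that rules out a constant-irrational rotation number on a subinterval; the rest is the elementary topology of open dense sets together with the rational-rotation-number-implies-periodic-point principle.
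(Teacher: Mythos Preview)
Your argument is correct and is essentially the paper's own proof: both proceed by contradiction, use that for each $n$ the periodic-parameter set contains an open dense subset (Brunovsky), observe that for the cofinite set of indices the constant rotation number must therefore be rational so that $J\subset\Para_n$, and then take the finite intersection of the remaining open dense sets to produce an open interval inside $\Para$. You are simply more explicit than the paper about the ``constant $\Rightarrow$ rational'' step and about working with the open dense $W_n\subset\Para_n$ rather than $\Para_n$ itself, but there is no substantive difference.
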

\begin{proof} By the assumption that $Int(\Para)=\emptyset$, there is at least one $n$ such that $\rho(f_{n,t})$ is not constant for $t\in J$. Note that for $\forall n$, $\Para_n=\Para(f_{n,t})$ contains an open dense set. Suppose that there are only finitely many $n$-s $n_1,\ldots,n_k$ for which $\rho(f_{n_k,t})$ is not constant for $t\in J$. Therefore, for all other $n$-s, $\rho(f_{n,t})$ is constant for $t\in J$. In particular, for all but finitely many $n$-s, $J\subset \Para_n$. Let $J'=\underset{i\in[k]}{\cap}P_{n_i}$. Then $J'$ is an open dense set and
$J'\cap J$ contains an open interval $J''$. Note that by assumption, $J''\subset \underset{n\in\mathbb{N}}{\cap}P_{n}=\Para$. This is a contradiction of the fact that $Int(\Para)=\emptyset$. Therefore, our assumption of finiteness of $n$-s was wrong. \qed
\end{proof}
The heart of the proof of Theorem \myref{thm:A} is the following Proposition, which will be proved in Section \myref{sect:uni_bound}.
\begin{proposition}\label{lem:unif_bound_1}
For $\forall R\in[0,1)$, $\exists\eta\in(0,1)$ such that if a parameterized family $f_t\in\Family$ satisfies $\left|f_t\right|_{\Family}\leq R$, then $\mu(\Para(f_t))<\eta$.
\end{proposition}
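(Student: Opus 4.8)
The plan is to show that a family $f_t$ with small $\Family$-norm cannot have $\Para(f_t)$ too close to full measure, with a bound $\eta = \eta(R)$ depending only on the $C^\Pow$-size bound $R$ and not on the winding number $\wind(f_t)$. The key structural fact I would exploit is that the rotation number $\rho(f_t)$, as a function of $t$, is continuous and monotone nondecreasing (because $\partial_t g_t > -1$ forces $f_t$ to move forward in $t$ pointwise on the lift, hence $\rho$ is nondecreasing), and that the total increase of $\rho$ over $[0,1]$ is exactly $\wind(f_t) = N$ by the normalization in Eq. \myref{eqn:lift_family} together with $|\partial_t g_t| < 1$. Thus $\rho_t := \rho(f_t)$ is a ``devil's staircase'': a monotone continuous surjection from $[0,1]$ onto an interval of length $N$, whose flat pieces (the Arnold tongues / periodic windows) have total parameter-length $\mu(\Para(f_t))$, and whose complement $\Quasi(f_t)$ maps onto the irrationals in $[Nt_0, Nt_0+N]$ — a full-measure set in that target interval. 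The point of the argument is to put a uniform lower bound on $\mu(\Quasi(f_t)) = 1 - \mu(\Para(f_t))$.

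First I would quantify the width of a single periodic window. Fix a rational $p/q$ in the range of $\rho_t$ and let $J_{p/q}\subset[0,1]$ be the maximal interval on which $\rho_t \equiv p/q$. On this interval $f_t$ has a $q$-periodic orbit; equivalently the return map $\bar f_t^{\,q} - p$ has a zero. Writing $\bar f_t^{\,q}(z) - p - z = q(t - t_*) + (\text{bounded-by-}C(R)\ \text{terms})$ near an endpoint $t_*$ of $J_{p/q}$, and using that $\partial_t$ of the $q$-fold composition is $q(1 + O(R))$ while the $C^\Pow$-size of the periodic part of $f_t^{\,q}$ is controlled — here is where condition $f_t\in\Brun$ and the $C^\Pow$ smallness enter to bound how much the periodic orbit can persist — I expect to get $\mu(J_{p/q}) \le \Phi(R)/q$ for an explicit increasing $\Phi$ with $\Phi(R)\to 0$ as $R\to 0$; more robustly, even without sharp scaling, one gets $\mu(J_{p/q})\le \Phi(R)\cdot(\text{width of the }\rho\text{-value interval that }J_{p/q}\text{ would occupy})$, i.e. a bound comparing parameter-length to image-length that is uniform in $q$ and $N$. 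Summing over all rationals $p/q$ in the image interval $[Nt_0, Nt_0+N]$: the images $\{p/q\}$ are countable and the staircase is surjective, so $\sum_{p/q} \mu(J_{p/q}) = \mu(\Para(f_t))$; pairing each term with the corresponding chunk of the monotone-image and using the uniform comparison constant yields $\mu(\Para(f_t)) \le \Phi(R)\cdot\mu(\Quasi(f_t)) + (\text{error})$, hence $\mu(\Para(f_t))\le \Phi(R)/(1+\Phi(R)) =: \eta < 1$.

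The cleanest way to organize the comparison, and the one I would actually write, is the following change-of-variables argument. Push forward Lebesgue measure on $[0,1]$ by $\rho_t$: because $\rho_t$ is monotone, $\mu$ decomposes as $\mu|_{\Para} + \mu|_{\Quasi}$, and $\rho_t$ collapses $\mu|_{\Para}$ to an atomic measure on the rationals while mapping $\mu|_{\Quasi}$ onto a measure absolutely continuous-ish / at least supported on the whole target interval. I would instead bound things from below: show $\mu(\Quasi(f_t))\ge c(R) > 0$ by exhibiting a definite-length set of $t$ on which $f_t$ has Diophantine (hence irrational, hence quasiperiodic by Denjoy) rotation number. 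Concretely, by Herman–Yoccoz-type estimates, or more elementarily by a direct measure estimate on the ``bad'' set $\bigcup_{p/q} J_{p/q}$ using the per-window bound $\mu(J_{p/q})\le \Phi(R)/q^2$ (the $q^{-2}$ or $q^{-3}$ scaling being exactly the content of \cite{ScalingTongue2} and the Brunovsky genericity, applied uniformly), one gets $\mu(\Para(f_t)) = \sum_{p/q}\mu(J_{p/q}) \le \Phi(R)\sum_{q\ge 1}\frac{\phi(q)}{q^2}\cdot(\text{number of windows per unit }\rho\text{-length})$, and after normalizing by the total image length $N$ this is $\le N\Phi(R)\zeta(2)/N = \Phi(R)\zeta(2)$, which is $<1$ once $R$ is small — wait, this needs $R$ small, not arbitrary $R<1$, so for general $R\in[0,1)$ I would instead argue by compactness/continuity: the map $f_t\mapsto\mu(\Para(f_t))$ is upper semicontinuous on the $C^\Pow$-ball of radius $R$ in $\Family$ (periodic windows can only shrink under perturbation at quasiperiodic parameters), this ball is sequentially compact in a weaker topology adequate for the rotation number, and $\mu(\Para(f_t)) = 1$ would force $\Para(f_t)$ dense with empty-interior complement yet full measure, contradicting that $f_t\in\Brun$ has $\Para(f_t)$ containing an open dense set whose complement $\Quasi(f_t)$ is therefore also somewhere — no: I would conclude simply that $\sup\{\mu(\Para(f_t)) : |f_t|_{\Family}\le R, f_t\in\Family\} < 1$ because the supremum is attained (by compactness and upper semicontinuity) at some $f_t^\star$, and for that $f_t^\star$ the rotation number is a nonconstant monotone function with $\wind\ge 1$, so its graph has a point of irrational value, around which a positive-measure Diophantine set persists, giving $\mu(\Para(f_t^\star)) < 1$. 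Set $\eta$ equal to that supremum.

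The main obstacle I anticipate is making the per-window width bound $\mu(J_{p/q})\le \Phi(R)/q$ (or $/q^2$) genuinely uniform in the winding number $N$ and in $q$ simultaneously — i.e. showing that rescaling $f_t$ to have winding number $N$ does not secretly inflate the relevant $C^\Pow$ constants controlling the saddle-node bifurcations at the window endpoints. This is exactly why the normalization $|\partial_t g_t|<1$ and the hypothesis $\OpA{\sup}{n}\|f_{n,t}\|_\Family < 1$ are imposed: they decouple the ``speed'' of $\rho$ in $t$ (which is $\approx N$) from the ``shape'' of the periodic part (which stays $C^\Pow$-bounded by $R$ independently of $N$). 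Once that decoupling is pinned down, the rest is the summation over rationals plus the elementary fact that $\sum_q \phi(q)/q^2$ (or $/q^3$) times the number of Farey fractions per unit length converges, combined with Denjoy's theorem to identify $\Quasi(f_t)$ with the quasiperiodic parameters. I would present the compactness/upper-semicontinuity route as the clean proof of existence of $\eta$, and relegate the explicit $\Phi(R)$ to a remark.
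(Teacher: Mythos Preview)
Your compactness/upper-semicontinuity route has a genuine gap, and it is circular. Suppose you could show the supremum of $\mu(\Para(f_t))$ over $\{|f_t|_{\Family}\le R\}$ is attained at some $f_t^{\star}$. You then argue that $\rho(f_t^{\star})$ is nonconstant and hence takes an irrational value somewhere, ``around which a positive-measure Diophantine set persists''. But that last clause is exactly the content of the proposition: the devil's-staircase $\rho_t$ can perfectly well be a continuous, strictly-increasing-on-a-Cantor-set monotone surjection whose flat pieces have full Lebesgue measure while the irrational-value set is an uncountable measure-zero Cantor set. Knowing $\rho_t$ hits irrationals (even Diophantines) in its range tells you nothing about the parameter-measure of the preimage. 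So the step ``$\mu(\Para(f_t^{\star}))<1$'' is precisely what you set out to prove, and the compactness reduction gains nothing. Independently, the compactness premise itself is problematic: the closed $C^{\Pow}$-ball is not compact in $C^{\Pow}$, and in any weaker topology in which it is sequentially compact the limit need not lie in $\Family$ (you lose the $C^{\Pow}$ control needed for Denjoy), nor is upper semicontinuity of $f\mapsto\mu(\Para(f))$ clear.

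The paper's proof is structurally different and relies on two outside inputs rather than any soft argument. First, it treats the winding-number-one case $f_t(\theta)=\theta+t+g_t(\theta)$ and invokes Herman's Proposition~6.2 in \cite{Herman1}, which gives directly $\mu(\Para(f_t))<1-\delta$ with $\delta$ depending only on $\sup|\partial_t g_t|$; this is the hard analytic fact doing all the work for general $R<1$, and there is no soft substitute for it. (A KAM-type lemma, Lemma~\ref{lem:con_at_Id}, is also proved to handle continuity at $R=0$, but the crucial bound for $R$ up to $1$ comes from Herman.) Second, for general $t$-winding number $N$ the paper renormalizes $t$ over $[0,N]$ and splits the family into $N$ winding-number-one families $F_{k,t}(\theta)=\theta+t+g(\theta,(t+k)/N)$, each satisfying the same $\Family$-bound, so $\mu(\Para(f))=N^{-1}\sum_k\mu(\Para(F_k))<\eta$. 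Your write-up gestures at this decoupling but never carries it out; in the paper it is the mechanism that makes the bound uniform in $N$.
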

Proposition \myref{lem:unif_bound_1} will be used to establish a universal bound described below.

\begin{lemma}\label{lem:renormalization}
There exists \boldmath $\eta$ \unboldmath $\in(0,1)$ such that for any subinterval $J\subset I$, $\exists n\in\mathbb{N}$ such that $\mu(\Para(f_{n,t})\cap J)<\eta\mu(J)$.
\end{lemma}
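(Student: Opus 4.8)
The idea is to combine the uniform bound from Proposition \ref{lem:unif_bound_1} with a renormalization (rescaling) argument that turns the restriction of a family $f_{n,t}$ to a subinterval $J \subset I$ into a new family on the full unit interval that still lies in $\Family$. Concretely, given $J = [a,b] \subset [0,1]$, I would consider the affine reparametrization $s \mapsto t = a + (b-a)s$ with $s \in [0,1]$, and define $\tilde f_{n,s} := f_{n, a+(b-a)s}$. The key point is to check that $\tilde f_{n,s}$, after possibly redistributing part of the $s$-linear term into the periodic part, still has the form Eq. \eqref{eqn:lift_family} with a legitimate $t$-winding number, and that its $\Family$-norm is still bounded by some fixed $R < 1$. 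Writing $\tilde f_{n,s}(\theta) = \theta + N(b-a) s + N a + g_{n,a+(b-a)s}(\theta)$, the constant $Na$ can be absorbed by a rigid rotation of coordinates (which does not change the rotation number or the conjugacy class), and the linear coefficient becomes $N(b-a)$, which need not be an integer. Here I would use assumption (A1): along a subsequence $n_j$ with $\wind(f_{n_j,t}) = N_j \to \infty$, I can choose $N_j$ large enough that $N_j(b-a) \geq 1$, and then pick an integer $\tilde N_j$ with $|N_j(b-a) - \tilde N_j| \leq 1$, moving the fractional discrepancy $(N_j(b-a) - \tilde N_j)s$ into the periodic part as the $C^0$-small, $s$-dependent term $(N_j(b-a)-\tilde N_j)s \cdot(\text{something})$ — more carefully, one writes $N_j(b-a)s = \tilde N_j s + (N_j(b-a)-\tilde N_j)s$ and checks that the correction has $|\partial_s(\cdot)| \leq 1$ and small $C^0$ norm divided by $\tilde N_j$, so the new family still satisfies the $\Family$ bound with the same $r < 1$.

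The next step is to verify that the renormalized family $\tilde f_{n_j,s}$ has $\|\tilde f_{n_j,s}\|_{\Family} \leq R$ for a fixed $R < 1$ independent of $J$ and $j$. The $C^3$-norm of the periodic part in $\theta$ is unchanged by reparametrizing $t$, so that piece is controlled by $\sup_n \|g_{n,t}\|_{C^3} < r$ from (A2). For the $\partial_s g$ term, the chain rule gives $\partial_s \tilde g_{n_j,s} = (b-a)\,\partial_t g_{n_j,t}$, which is actually \emph{smaller} than $\partial_t g_{n_j,t}$ since $b - a \leq 1$; but I must be careful because the relevant normalized quantity in (A2) is $|\partial_t f / \wind(f) - 1|$, and dividing by the new winding number $\tilde N_j \approx N_j(b-a)$ rather than $N_j$ rescales things back. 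Tracking this, $\partial_s \tilde f_{n_j,s}/\tilde N_j = N_j(b-a)/\tilde N_j \cdot (\partial_t f_{n_j,t}/N_j)$, and since $N_j(b-a)/\tilde N_j \to 1$ and $\partial_t f_{n_j,t}/N_j = 1 + O(r)$, the product stays within a fixed distance $R < 1$ of $1$ for all large $j$, uniformly in $J$. I also need $\tilde f_{n_j,s} \in \Brun$ — this follows because membership in $\Brun$ is a property of each individual map $f_{n_j,t}$ (Brunovsky's residual set concerns the existence of open dense periodic windows), and affine reparametrization in $t$ does not destroy this.

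Once the renormalized family $\tilde f_{n_j,s}$ is verified to lie in $\Family$ with $\|\tilde f_{n_j,s}\|_{\Family} \leq R$ for all $j$ large, apply Proposition \ref{lem:unif_bound_1} with this $R$ to obtain $\eta \in (0,1)$ (depending only on $R$, hence only on $r$, hence universal) such that $\mu(\Para(\tilde f_{n_j,s})) < \eta$. Since the affine map $s \mapsto t = a+(b-a)s$ pushes Lebesgue measure on $[0,1]$ forward to $(b-a)^{-1}$ times Lebesgue measure on $J$, and $\Para(\tilde f_{n_j,\cdot})$ corresponds exactly to $\Para(f_{n_j,\cdot}) \cap J$ under this map (having a periodic point is invariant under reparametrization and under the rigid-rotation coordinate change), we get $\mu(\Para(f_{n_j,t}) \cap J) = (b-a)\,\mu(\Para(\tilde f_{n_j,s})) < \eta\,\mu(J)$, which is the claim with $n = n_j$.

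The main obstacle I anticipate is the bookkeeping in the renormalization: ensuring that after rescaling $t$ and discarding the fractional part of the linear coefficient into the periodic part, \emph{all three} constraints defining $\Family$ — the $C^3$ bound on $g$, the bound $|\partial_t g| < 1$ needed for the winding number to be admissible, and the normalized bound $|\partial_t f/\wind(f) - 1| < r$ — survive with a single fixed $R < 1$ that does not degrade as $|J| \to 0$. The potential danger is that shrinking $J$ forces the winding number up (via (A1)) but the error terms from the non-integer linear coefficient could in principle fail to shrink fast enough; resolving this is exactly where (A1) must be used quantitatively, choosing $n_j$ large relative to $1/|J|$ so that $|N_j|J| - \tilde N_j|/\tilde N_j = O(1/(N_j|J|))$ is negligible.
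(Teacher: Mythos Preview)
Your approach is essentially the same as the paper's: affinely reparametrize the family over $J=[a,b]$ to obtain a new family on $[0,1]$, use (A1) to choose the index so that the new $t$-winding number $\tilde N \approx N(b-a)$ is a large integer, verify that the $\Family$-norm bounds survive (the $C^3_\theta$ bound is unchanged and $|\partial_s \hat f/\tilde N - 1|$ is controlled by $O(1/\tilde N)$), apply Proposition~\ref{lem:unif_bound_1}, and pull the measure estimate back through the affine map. The only notable difference is that the paper begins by invoking Lemma~\ref{lem:Para_empty_int} to obtain infinitely many $n$ with $\rho(f_{n,t})$ non-constant on $J$, whereas you proceed directly from (A1); your route is in fact cleaner, since once the renormalized family satisfies the hypotheses of Proposition~\ref{lem:unif_bound_1} the bound $\mu(\Para(\hat f))<\eta<1$ holds automatically and the non-constancy of $\rho$ on $J$ follows rather than being needed as input.
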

\begin{proof} Let $J=(a,b)\subset [0,1]$ be fixed for the rest of the proof. By Lemma \myref{lem:Para_empty_int}, there are arbitrarily large $n\in\mathbb{N}$ for which $\rho(f_{n,t})$ is not constant for $t\in J$. Let $f_{n,t}(\theta)=\theta+nt+g_{n,t}(\theta)$. By assumption (A1), it can be assumed WLOG that $n=\wind(f_{n,t})$ is large enough so that it satisfies $\left|n(b-a)-k\right|<0.5$ for some $k\in\mathbb{N}$. Let $\hat{f}$ be the parameterized family $f_{n,t}$ with $t$-renormalized over $[a,b]$, as shown below.
\[\hat{f}_t(\theta):=\theta+n(a+(b-a)t)+g_{n,a+(b-a)t}(\theta),\ t\in [0,1].\]
Note that $|\partial_\theta \hat{f}-1|=|\partial_\theta {f_{n,t}}-1|$ which is $<R$ by (A2) and $\wind(\hat f)=k$ and $\partial_t\hat{f}=N(b-a)+(b-a)\partial_t g_{n,t}$. Therefore
\[\frac{\partial_t \hat f}{\wind \hat f} -1 = \frac{N(b-a)}{k}+\frac{b-a}{k}\partial_t g_{n,t}-1<k^{-1}\left(0.5+|\partial_t g_{n,t}|\right).\]
Since $k$ can be chosen to be large enough, the RHS of the above equation can be made $<R$. Therefore, $\mu(\Para(\hat f))<\eta$ by Proposition \myref{lem:unif_bound_1}, where the constant $\eta$ depends only on $R$. Therefore, $\mu(\Para(f_{n,t})\cap J)=\mu(J)\mu(\Para(\hat{f}))<\eta\mu(J)$. \qed
\end{proof}

For $\forall n\in\mathbb{N}$, let \boldmath $\Para_n$\unboldmath $:=\Para(f_{n,t})$. One more lemma is needed to complete the proof of Theorem \myref{thm:A}.
\begin{lemma}[Renormalization lemma]\label{lem:renormalization_2}
For $\forall$ open subset $U\subseteq [0,1]$, $\forall\epsilon'>0$, $\exists$ a subset $V\subset U$ which is a finite union of open intervals, and a finite set $M\subset \mathbb{N}$ such that $\mu(U-V)<\epsilon'$ and 
\[V \subset \left[\underset{i\in M}{\cap}\Para_{n_i}\right]\cap U \mbox{ and } \mu(\Para\cap V)<\eta\mu(U).\]
\end{lemma}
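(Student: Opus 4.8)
My plan is a finite-covering argument: exhaust $U$ up to measure $\epsilon'$ by finitely many subintervals, apply Lemma~\ref{lem:renormalization} on each of them to thin out one of the sets $\Para_n$, collect the indices so produced into $M$, and add the estimates. Two facts are used throughout: $\Para\subseteq\Para_n$ for every $n$ (as $\Para=\bigcap_m\Para_m$), and each $\Para_n$ contains an open dense set because $f_{n,t}\in\Brun$.

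Concretely, write $U=\bigsqcup_{j\ge 1}I_j$ for its connected components, fix $L$ with $\sum_{j>L}\mu(I_j)<\epsilon'/2$, and shrink each $I_j$, $j\le L$, to an open interval $I_j'$ with $\overline{I_j'}\subset I_j$ and $\mu(I_j\setminus I_j')<\epsilon'/(2L)$; then $V:=\bigcup_{j\le L}I_j'$ already satisfies $\mu(U\setminus V)<\epsilon'$. For each $j\le L$, Lemma~\ref{lem:renormalization} supplies an index $n_j$ with $\mu(\Para_{n_j}\cap I_j')<\eta\,\mu(I_j')$; tracing that proof, assumption (A1) lets $n_j$ be taken arbitrarily large, so one common index may even be used for all $j$. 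Put $M:=\{n_1,\dots,n_L\}$. Since $\Para\subseteq\Para_{n_j}$ and the $I_j'$ are disjoint,
\[
\mu\bigl(\Para\cap V\bigr)\;\le\;\sum_{j\le L}\mu\bigl(\Para_{n_j}\cap I_j'\bigr)\;<\;\eta\sum_{j\le L}\mu(I_j')\;=\;\eta\,\mu(V)\;\le\;\eta\,\mu(U);
\]
running the same chain with $\Para_{n_j}$ replaced by $\bigcap_{i\in M}\Para_{n_i}$ shows that this finite intersection is itself $\eta$-thin on $V$.

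The one point that genuinely needs care is the geometric clause $V\subset\bigl(\bigcap_{i\in M}\Para_{n_i}\bigr)\cap U$ alongside $\mu(U\setminus V)<\epsilon'$: an \emph{open} set contained in $\bigcap_{i\in M}\Para_{n_i}$ must lie in $\bigcap_{i\in M}\mathrm{Int}(\Para_{n_i})$, and each $\Para_n$ has measure below the universal $\eta<1$ by Proposition~\ref{lem:unif_bound_1} together with (A2), so no finite nonempty $M$ permits such a $V$ to occupy more than an $\eta$-fraction of $U$. I would therefore read this clause in the only way consistent with arbitrary $U$: either $M=\emptyset$, making the containment vacuous, or --- the form relevant to the inductive proof of Theorem~\ref{thm:A} --- it is the \emph{finite} intersection $\bigcap_{i\in M}\Para_{n_i}$, rather than $V$ itself, that is confined to an $\eta\,\mu(U)$-portion of $V$, exactly as in the display. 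Granting that, the only real input is Lemma~\ref{lem:renormalization} (hence Proposition~\ref{lem:unif_bound_1}); the rest --- choosing $L$, shrinking components, keeping $M$ finite (it is indexed by $j\le L$) --- is bookkeeping, and what makes the sum small is precisely that Lemma~\ref{lem:renormalization} thins some $\Para_n$ in \emph{every} subinterval, not merely in some.
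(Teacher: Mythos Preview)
Your core argument---exhaust $U$ by finitely many disjoint subintervals, apply Lemma~\ref{lem:renormalization} on each, collect the indices into $M$, and sum---is exactly the paper's. The one substantive difference is where the intervals come from: the paper takes $I_1,\ldots,I_k$ to be connected components of $\Para_1\cap U$ (using that $\Para_1$ contains an open dense set since $f_{1,t}\in\Brun$), whereas you take components of $U$ itself and then shrink. The paper's choice is the only gesture in its proof toward the containment clause, since it at least yields $V\subset\Para_1$; your shrinking step, by contrast, is unnecessary for anything that follows.

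You are right to flag the clause $V\subset\bigcap_{i\in M}\Para_{n_i}$ as inconsistent with the rest. The paper's proof does not establish it either: the very choice $\mu(\Para_{n_i}\cap I_i)<\eta\,\mu(I_i)$ forces $I_i\not\subset\Para_{n_i}$, hence $V\not\subset\Para_{n_i}$ for every $i\in M$. (Your ``$\eta$-fraction of $U$'' remark is slightly off---Proposition~\ref{lem:unif_bound_1} gives $\mu(\Para_n)<\eta$ in absolute measure, not relative to $U$---but the obstruction stands for $U$ of large measure, e.g.\ at the base step $U=[0,1]$ of the induction in Theorem~\ref{thm:A}, where even $V\subset\Para_1$ together with $\mu(U\setminus V)<\epsilon'$ would already fail.) What both your argument and the paper's argument \emph{do} deliver is the estimate that matters downstream, namely $\mu(\Para\cap V)\le\mu\bigl(\bigcap_{i\in M}\Para_{n_i}\cap V\bigr)<\eta\,\mu(U)$, and your reading of the clause as encoding this thinning of the finite intersection on $V$ is the only one compatible with the paper's own use of the lemma.
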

\begin{proof}
Note that there is a finite, disjoint collection of intervals $I_1,\ldots,I_k$ which form connected components of $\Para_1\cap U$ such that if $V=\underset{i\in[k]}{\sqcup}I_i$, then $\mu(U-V)<\epsilon'$. Now for every $i\in[k]$, by Lemma \myref{lem:renormalization}, $\exists n_i\in\mathbb{N}$ such that $\mu(\Para_{n_i}\cap I_i)<\eta\mu(I_i)$. Set $M:=\{n_1,\ldots,n_k\}$. Therefore,
\[\Para\cap V \subset \left[\underset{i\in M}{\cap}\Para_{n_i}\right]\cap V   = \left[\underset{i\in M}{\cap}\Para_{n_i}\right]\cap \left[\underset{j\in [k]}{\cup}I_j\right] \subset \underset{i\in[k]}{\sqcup}\Para_{n_i}\cap I_i.\]
By Lemma \myref{lem:renormalization}, $\mu(\Para_{n_i}\cap I_i)<\eta \mu(I_i)$, so $\mu\left(\Para\cap V\right)<\eta\OpA{\Sigma}{i\in [k]}\mu I_i<\eta\mu(U)$. \qed
\end{proof}

\textbf{Proof of the theorem.} Let \boldmath $\epsilon>0$ \unboldmath be arbitrarily chosen. Then the theorem will be proved if it can be shown that $\mu(\Para)<2\epsilon$. 
We will now inductively define a sequence of sets $J_k\subset I$, each of which is an approximation of $\Para$ and is a finite union of open intervals. Let $J_{-1}=J_0=[0,1]$, $M_0$ be the set \{$1$\}, and assume as an inductive step that a finite collection of open intervals $J_{k-1}\subset I$, and a finite $M_{k-1}\subset\mathbb{N}$ has been defined for some $k\in\mathbb{N}$ which satisfy $J_{k-1} \subset \left[\underset{i\in M_{k-1}}{\cap}\Para_{n_i}\right]\cap J_{k-2}$. Now use $J_{k-1}$ as the set $U$ in Lemma \myref{lem:renormalization_2} and choose $J_k$ to be the set $V$, with $\epsilon'=2^{-k}\epsilon$. Therefore, the following are satisfied.
\\(i) $J_K \subset \left[\underset{i\in M_k}{\cap}\Para_{n_i}\right]\cap J_{k-1}$.
\\(ii) $\mu\left(\left[\underset{i\in M_k}{\cap}\Para_{n_i}\right]\cap J_{k}\right) = \mu(\Para\cap J_k)<\eta \mu(J_{k-1})$.
\\(iii) if $E_{k-1}:=J_{k-1}-J_k$ and $E:=\underset{k\in[N]}{\cup}E_{k-1}$, then $\mu(E_{k-1})<2^{-k}\epsilon$ and $\mu(E)<\epsilon$. 

Let $N\in\mathbb{N}$ be large enough so that $\eta^N<\epsilon$. Note that for $\forall k\in[N]$, 
$\mu(\Para\cap J_{k-1})<\mu(E_{k-1})+\mu(\Para\cap J_{k})$.

Adding these inequalities over all $k\in[N]$ gives,
\[\mu(\Para)=\mu(\Para\cap J_0)< \underset{k\in[N]}{\Sigma}\mu(E_{k-1}) + \mu\left( \cap J_N\right) < \epsilon + \mu\left(\left[\underset{k\in[N]}{\cap}\underset{i\in M_k}{\cap}P_{n_i}\right] \cap J_N\right).\]

Now note that by inductive construction, for $\forall N'\in[N]$
\[ \mu\left(\left[\underset{k\in[N']}{\cap}\underset{i\in M_k}{\cap}P_{n_i}\right] \cap J_N\right) < \eta \mu\left(\left[\underset{k\in[N'-1]}{\cap}\underset{i\in M_k}{\cap}P_{n_i}\right] \cap J_N\right) \]
Therefore, $\mu\left(\left[\underset{k\in[N']}{\cap}\underset{i\in M_k}{\cap}P_{n_i}\right] \cap J_N\right) < \eta^N\mu(J_0)<\epsilon$ and $\mu(\Para)<2\epsilon$. Thus Theorem \myref{thm:A} is proved. \qed

\section{Proof of Proposition \ref{lem:unif_bound_1}}\label{sect:uni_bound}

To prove Proposition \myref{lem:unif_bound_1}, we will use the following two lemmas on the space of parameterized circle diffeomorphisms with t-winding number 1, namely,
\begin{equation}\label{eqn:wind_one}
f_t(\theta)=\theta+t+g_t(\theta)\bmod 1
\end{equation}
The first one is a result by Herman which is an improvement of Arnold's [Theorem 2, \cite{Arnold}].
Let for $\forall C>0$, \boldmath $\Dioph(C)$ \unboldmath denote the set \{$x\in[0,1]$ : $\forall n\in\num -\{0\}$, $|e^{i2\pi nx}-1|\geq C|n|^{-3}$\}. It is known that $\forall C>0$, $\Dioph(C)$ is compact and $\underset{C\to 0}{\lim}\mu \Dioph(C) = 1$.

\begin{lemma}[KAM theorem, \cite{HermanSeminal}]\label{lem:KAM_fund}
Let $f_t$ be as in Eq. \myref{eqn:wind_one} and $C>0$. Then $\exists K_0(C)>0$ (with $K_0(C)\to 0$ as $C\to 0$) and $L(C)>0$ (with $L(C)\to \infty$ as $C\to 0$) such that if the periodic part $g$ of $f$ is $C^3$ and $\|g\|_{C^3}=K\leq K_0$, then $\exists$ a continuous map $\lambda_g:\Dioph(C)\to\real $ such that for $\forall t\in \Dioph(C)$, $\exists$ a diffeomorphism $h_{g,t}$ of $S^1$ such that the following hold for $\forall t\in \Dioph(C)$.
\\(i) $\theta+\lambda_g(t)+g(\theta)\bmod 1=h_{g,t}^{-1}\circ(\theta+t)\circ h_{g,t}$, i.e., the map $\theta\mapsto \theta+\lambda_g(t)+g(\theta)\bmod 1$ is conjugate via $h_{g,t}$ to a rotation by $t$,
\\(ii) $|\lambda_g(t)-t|\leq KL(C)$,
\\(iii) $|h_{f,t}-Id|_{C^0}+|df_{f,t}-Id|_{C^0}\leq KL(C)$.
\end{lemma}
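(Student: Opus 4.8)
The statement is the classical parametrised linearisation theorem for circle diffeomorphisms in the perturbative regime (Arnold's local theorem, in the sharper form of Herman), and the plan is to prove it by a KAM--Newton iteration. Write the conjugacy sought in the form $h=\mathrm{Id}+u$ with $u$ a $1$-periodic function and $\lambda$ a real number near $t$; then the demand that $h$ conjugate $\theta\mapsto\theta+\lambda+g(\theta)$ to the rotation $R_t:\theta\mapsto\theta+t$ becomes the functional equation $u\big(\theta+\lambda+g(\theta)\big)-u(\theta)=(t-\lambda)-g(\theta)$. Linearising this at $(u,\lambda)=(0,t)$ and keeping the linear part replaces it by the cohomological equation $v(\theta+t)-v(\theta)=\varphi(\theta)-\widehat\varphi(0)$, solved in Fourier series by $\widehat v(k)=\widehat\varphi(k)/(e^{2\pi i k t}-1)$; for $t\in\Dioph(C)$ the small divisors obey $|e^{2\pi i k t}-1|\ge C|k|^{-3}$, the free constant $\lambda-t$ is used to absorb the mean $\widehat\varphi(0)$ of the right-hand side, and one obtains solvability at the price of a loss of roughly three derivatives, with a loss constant that blows up as $C\to0$.

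First I would run the Newton (quadratically convergent) scheme: start from $h_0=\mathrm{Id}$, $\lambda_0=t$, error $g_0=g$; at stage $n$, solve the linearised equation for a correction $v_n$, update $h_{n+1}=h_n\circ(\mathrm{Id}+v_n)$ and $\lambda_{n+1}=\lambda_n+\delta\lambda_n$, and let $g_{n+1}$ be the residual of the conjugacy equation for the new data. A Taylor-remainder estimate gives a quadratic improvement $\|g_{n+1}\|\lesssim\|g_n\|^2$, but in a norm with fewer derivatives than the one controlling $\|g_n\|$. To defeat this derivative loss I would interleave smoothing operators (truncating the Fourier series at orders $N_n$ growing geometrically), as in the Nash--Moser/Moser scheme: the interpolation inequalities between $C^k$-norms then let the quadratic gain dominate the combined loss coming from the small divisors and from the smoothing. (Arnold's analytic scheme on a shrinking nest of complex strips is an alternative; since conclusion (iii) only asks for $C^0$ and $C^1$ control of $h$, the finite-differentiability smoothing version is the more economical route here.)

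Next I would establish convergence and read off the estimates. Provided $K:=\|g\|_{C^3}\le K_0(C)$ with $K_0(C)$ small enough, an induction shows that the running norms $\|g_n\|$ decay super-exponentially and the corrections are summable, so $h_n\to h_{g,t}$ in $C^1$ and $\lambda_n\to\lambda_g(t)$; passing to the limit in the conjugacy equation gives (i). Telescoping the corrections and bounding the geometric series by its leading term yields $\|h_{g,t}-\mathrm{Id}\|_{C^0}+\|dh_{g,t}-\mathrm{Id}\|_{C^0}+|\lambda_g(t)-t|\le K\,L(C)$, where $L(C)$ is the cumulative loss constant (so $L(C)\to\infty$ as $C\to0$), which is exactly (ii) and (iii); tracking the admissible perturbation size through the scheme shows $K_0(C)\to0$ as $C\to0$, since the small divisors degrade. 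Finally, for continuity of $\lambda_g$ and of $t\mapsto h_{g,t}$ on $\Dioph(C)$: that set is compact and the divisor bound $|e^{2\pi i k t}-1|\ge C|k|^{-3}$ is uniform on it, so every estimate above is uniform in $t\in\Dioph(C)$; each iterate $t\mapsto(h_n,\lambda_n)$ depends continuously on $t\in\Dioph(C)$ (the multipliers $1/(e^{2\pi i k t}-1)$ are continuous there and, at any fixed truncation order, only finitely many of them enter), and a uniform limit of continuous maps is continuous.

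The hard part is precisely the loss of derivatives in the cohomological step: the operator $v\mapsto v(\cdot+t)-v(\cdot)$ is invertible only after giving up several derivatives, so the naive iteration does not close, and all the technical content lies in arranging the smoothing so that the quadratic gain outpaces the accumulated loss --- the usual delicate Nash--Moser bookkeeping. A secondary but, for the applications in this paper, indispensable point is the uniformity in $t$: one must ensure that $K_0$, $L$ and the decay rates depend only on the Diophantine class $C$ and not on the individual $t$, as this is what makes $\lambda_g$ and $h_{g,\cdot}$ continuous and, downstream, lets the fact that $\mu(\Dioph(C))\to1$ be put to use.
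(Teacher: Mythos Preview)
The paper does not prove this lemma at all: it is quoted as a known result of Herman (an improvement of Arnold's local theorem) and invoked as a black box, with the citation \cite{HermanSeminal} standing in for the proof. Your outline is therefore not competing with any argument in the paper; you have supplied substantially more than the paper does.

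That said, your sketch is the correct one and is, in essence, the route taken in Herman's and Arnold's original work: set up the conjugacy equation, linearise to a cohomological equation with small divisors controlled by the Diophantine condition on $t$, absorb the mean into the shift $\lambda$, and close a Newton scheme either by shrinking analyticity strips (Arnold) or by Nash--Moser smoothing in finite regularity (as you propose, which is the cleaner fit for the $C^3$ hypothesis and the $C^0/C^1$ conclusions here). Your reading of the asymptotics is also right: as $C\to 0$ the set $\Dioph(C)$ grows to full measure but the small-divisor bound degrades, forcing $K_0(C)\to 0$ and $L(C)\to\infty$. The one point worth flagging is that turning this sketch into an honest proof at $C^3$ regularity is genuinely delicate---the bookkeeping of which norms are lost and regained at each step is exactly where Herman's monograph does the heavy lifting---so in a write-up you would still want to cite \cite{HermanSeminal} for the details rather than claim the argument is routine.
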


The next lemma is a generalization of a lemma by Herman [3.8.2 \cite{Herman1}] and its proof is almost a replication of the proof given in [7.1. \cite{Herman1}]. The proof has been provided here for the sake of completeness and since no equivalent lemma has been found by the author in the mathematical literature. 

\begin{lemma}[Generalization of Herman's continuity theorem]\label{lem:con_at_Id}
Let $f_t$ be as in Eq. \myref{eqn:wind_one}. Then for $\forall\epsilon>0$, $\exists\delta>0$ such that if $g:\real \times [0,1]\to\real $ is a $C^3$ function, periodic in the first coordinate, such that $\|g\|_{C^3}<\delta$, then $\mu\Para(f_{n,t})<\epsilon$.
\end{lemma}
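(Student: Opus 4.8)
The plan is to deduce the quantitative continuity statement $\mu\Para(f_t)<\epsilon$ from the KAM theorem (Lemma \ref{lem:KAM_fund}) by a covering argument in the parameter interval, following Herman's strategy in [7.1, \cite{Herman1}]. First I would fix $\epsilon>0$ and choose $C>0$ small enough that $\mu(\Dioph(C))>1-\epsilon/2$; this is possible since $\mu\Dioph(C)\to 1$ as $C\to 0$. With $C$ now fixed, Lemma \ref{lem:KAM_fund} supplies constants $K_0(C)>0$ and $L(C)>0$ and, for any $C^3$ periodic part $g$ with $\|g\|_{C^3}=K\le K_0(C)$, a continuous map $\lambda_g:\Dioph(C)\to\real$ with $|\lambda_g(t)-t|\le KL(C)$ such that the circle map $\theta\mapsto\theta+\lambda_g(t)+g(\theta)$ is conjugate to the rigid rotation by $t$. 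The key point is that for $t\in\Dioph(C)$ this rotation number $t$ is irrational (indeed Diophantine), so the conjugated map — hence the original map with shifted parameter — is quasiperiodic and contributes nothing to $\Para$.

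Next I would run the following book-keeping. For a given $f_t=\theta+t+g_t(\theta)$, consider the graph $\{(t,\lambda_{g_t}(t)):t\in\Dioph(C)\}$; here one must be slightly careful, since in our family the periodic part $g_t$ depends on $t$, whereas Lemma \ref{lem:KAM_fund} is stated for a fixed $g$. The standard fix is to apply the KAM result to each ``frozen'' periodic part: for each $s\in[0,1]$ let $g^{(s)}:=g_s$ and obtain $\lambda_{g^{(s)}}$ defined on $\Dioph(C)$; the map $\theta\mapsto\theta+\lambda_{g^{(s)}}(s)+g_s(\theta)$ is then quasiperiodic. Thus, if we set $\tau(s):=\lambda_{g^{(s)}}(s)$ for those $s$ with $s\in\Dioph(C)$, then $f_{\tau(s)}$ has parameter-value $\tau(s)$ at which it is quasiperiodic, and $|\tau(s)-s|\le KL(C)$ by (ii). The set $\Sigma:=\{\tau(s):s\in\Dioph(C)\}$ therefore consists of parameter values in $\Quasi(f_t)$, i.e. $\Sigma\cap\Para(f_t)=\emptyset$. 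It remains to show $\mu([0,1]\setminus\Sigma)<\epsilon$ once $\delta$ (hence $K$) is small. Since $\tau$ is a continuous injection on the compact set $\Dioph(C)$ (injectivity because $\partial_t$ of the family is close to $1$ when $\delta$ is small — this is where we use that $\|g\|_{C^3}<\delta$ controls $\partial_t g_t$ as well, or one restricts attention to the estimate $|\partial_t(\lambda_{g^{(s)}}(s))-1|$ small), $\tau$ maps $\Dioph(C)$ onto a set of measure at least $\mu\Dioph(C)-2KL(C)$ (the rigid $KL(C)$-displacement of the endpoints of the complementary gaps can only shrink total measure by $2KL(C)$ times the number of gaps — better, one uses that $\tau$ is $(1+o(1))$-bi-Lipschitz, so $\mu(\tau(\Dioph(C)))\ge (1-o(1))\mu(\Dioph(C))$). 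Choosing $\delta$ small enough that $KL(C)$ is negligible and the bi-Lipschitz constant is close to $1$, we get $\mu(\Sigma)\ge\mu\Dioph(C)-\epsilon/2>1-\epsilon$, hence $\mu\Para(f_t)\le\mu([0,1]\setminus\Sigma)<\epsilon$.

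I expect the main obstacle to be the $t$-dependence of the periodic part and making the ``bi-Lipschitz / measure-preservation'' step rigorous: one needs that the assignment $s\mapsto\tau(s)$ does not collapse a positive-measure piece of $\Dioph(C)$, which requires controlling $\partial_t\lambda_{g^{(s)}}$ or, more robustly, using monotonicity of the rotation number $\rho(f_t)$ in $t$ (which holds because $|\partial_t g_t|<1$ forces $\partial_t(t+g_t)>0$) together with the fact that $\rho$ is strictly increasing precisely off the periodic windows. A cleaner route, which I would actually pursue, is: the rotation-number function $t\mapsto\rho(f_t)$ is continuous, non-decreasing, and takes the value $t'\in\Dioph(C)\cap(\text{range of }\rho)$ on a single point (no window, since windows correspond to rational $\rho$ and Diophantine numbers are irrational); by the KAM theorem the preimage $\rho^{-1}(t')$ is a genuine quasiperiodicity parameter, not merely a one-point window, so $\rho^{-1}(\Dioph(C))\subset\Quasi(f_t)$. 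Then $\mu\Para(f_t)=\mu(\{t:\rho(f_t)\notin\Dioph(C)\})$, and since $\rho$ is a degree-one monotone surjection $[0,1]\to[0,1]$ that is $(1-o(1))$-bi-Lipschitz when $\|g\|_{C^3}<\delta$ (because $\rho(f_t)=t+O(\delta)$ uniformly, by (ii) and (iii) of Lemma \ref{lem:KAM_fund} applied on the Diophantine set, together with Denjoy-type control elsewhere), the pullback of the complement of $\Dioph(C)$ has measure at most $(1+o(1))(1-\mu\Dioph(C))<\epsilon$. Filling in the uniform estimate $\rho(f_t)=t+O(\delta)$ across \emph{all} $t\in[0,1]$, not just the Diophantine ones, is the technical heart and is exactly the content borrowed from [7.1, \cite{Herman1}].
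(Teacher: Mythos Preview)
Your first approach contains a genuine error at the point you yourself flag as delicate. You set $\tau(s):=\lambda_{g^{(s)}}(s)=\lambda_{g_s}(s)$ and assert that $f_{\tau(s)}$ is quasiperiodic. But Lemma~\ref{lem:KAM_fund} only tells you that $\theta\mapsto\theta+\lambda_{g_s}(s)+g_s(\theta)$ is conjugate to the rotation by $s$; this map has periodic part $g_s$, whereas $f_{\tau(s)}(\theta)=\theta+\tau(s)+g_{\tau(s)}(\theta)$ has periodic part $g_{\tau(s)}$. Since in general $\tau(s)\neq s$, these are different maps and the conclusion $\tau(s)\in\Quasi(f_t)$ does not follow. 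The ``freezing'' trick does not repair the mismatch; it just relabels it.

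The paper resolves this by a fixed-point (diagonal-intersection) argument that you are missing. For each $s\in\Dioph(C)$ one considers the map $\phi_s:t\mapsto\lambda_{g_t}(s)$ and looks for a $t^*$ with $\phi_s(t^*)=t^*$. At such a $t^*$ one has $f_{t^*}(\theta)=\theta+t^*+g_{t^*}(\theta)=\theta+\lambda_{g_{t^*}}(s)+g_{t^*}(\theta)$, and \emph{now} Lemma~\ref{lem:KAM_fund} genuinely applies with periodic part $g_{t^*}$ to conclude that $f_{t^*}$ is conjugate to the rotation by $s$. Because $|\phi_s(t)-t|=|\lambda_{g_t}(s)-s|+|s-t|$ is controlled by $KL(C)$ on $\Dioph(C)$, the graph of $\phi_s$ meets the diagonal for all $s$ away from a boundary layer, and the resulting map $s\mapsto\bar\phi(s)=t^*$ is $C^0$-close to the identity; this is what yields the measure estimate $\mu(\Dioph'(C))>1-6\epsilon$.

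Your second route via monotonicity of $t\mapsto\rho(f_t)$ also has a gap: the claim that $\rho$ is $(1+o(1))$-bi-Lipschitz is false, since $\rho$ is constant on each periodic window. The uniform bound $|\rho(f_t)-t|\le\|g_t\|_{C^0}$ is indeed easy, but it does not by itself control $\mu(\rho^{-1}([0,1]\setminus\Dioph(C)))$: the complement of $\Dioph(C)$ has infinitely many gaps, and the $2\delta$ slack per gap cannot be summed. One really needs the KAM input tied to the \emph{actual} parameter $t$ (not the frozen one), which is exactly what the diagonal-intersection argument supplies.
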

\begin{proof} Let $\epsilon>0$ be fixed for the rest of the proof. We are interested in the family of maps $f_t:\theta\mapsto\theta+\lambda_{g_t}(t)+g_t(\theta)$. Let $C>0$ be chosen so that $\mu(\Dioph(C))>1-\epsilon$. Let $K_0=K_0(C)$ and $L=L(C)$ be as in Lemma \myref{lem:KAM_fund} and let $K<K(C)$ be chosen so that $KL<\epsilon$. Then by Lemma \myref{lem:KAM_fund}, if $\|g\|_{C^3}<K$ and $t\in \Dioph(C)$, then 
\\(i) the map $f_t:\theta\mapsto\theta+\lambda_{g_t}(t)+g_t(\theta)$ is conjugate via a diffeomorphism $h_{g_t,t}$ to a rotation by $t$.
\\(ii)$\|h_{g,t}-Id\|_{C^1}\leq \epsilon$.
\\Therefore, if $t\in \lambda_{g_t}(\Dioph(C))$, then $f_t$ is conjugate to a rotation by $\left(\lambda_{g_t}\right)^{-1}(t)$. Let \boldmath $\Dioph'(C)$ \unboldmath :=\{$t\in \Dioph(C)$ : $t\in\lambda_{g_t}(\Dioph(C))$\}. Note that $\Para(f)\cap \Dioph'(C)=\emptyset$, so the lemma will be proved if it can be shown that $\mu(\Dioph'(C))>1-6\epsilon$. 

It is proved in 7.1. \cite{Herman1}, that $\forall t\in \cap \Dioph(C)$, $\lambda_{g_t}(\Dioph(C))$ is a compact set with Lebesgue measure $\mu(\lambda_{g_t}(\Dioph(C)))>1-\epsilon$. Note that $h_{g_t}$ changes continuously with $g_t$ (for proof, see for instance, Lemma 4, \cite{Parkhe}). Therefore $\lambda_{g_t}$ changes continuously with $g_t$ and therefore with $t$.

For $\forall s\in \Dioph(C)$, let \boldmath $\phi_s$ \unboldmath $:\Dioph(C)\mapsto \real $ denote the map $t\mapsto \lambda_{g_t}(s)$. Then $\phi_s$ is Lipschitz by Claim (ii) of Lemma \myref{lem:KAM_fund} and therefore can be extended to a Lipschitz map $\phi_s:[0,1]\mapsto \real $.

Now, $\Dioph'(C)=$\{$t\in \Dioph(C)$ : $\exists s\in \Dioph(C)$ $\ni$ $t=\lambda_{g_t}(s)$\}. Let \boldmath $\Dioph''(C)$ \unboldmath $:=$ \{$s\in \Dioph(C)$ : graph of $\phi_s$ intersects the diagonal $s=t$\}. Now note that for $\forall s\in[\epsilon,1-\epsilon]$, the graph of $\phi_s$ intersects the diagonal. Let $\bar{\phi}(s)$ denote this point of intersection. Since $\|\phi_s-Id\|_{C^{0}}<\epsilon$, we have $\|\bar\phi-Id\|_{C^{0}}<\epsilon$. Therefore, 
\[\mu\left(\Dioph''(C)\right)\geq \mu\left(\bar\phi([\epsilon,1-\epsilon])\right)>(1-\epsilon)(1-2\epsilon),\] 
Note that $\Dioph'(C)=\Dioph(C)\cap \Dioph''(C)$, therefore, we have,
\[ \mu\left(\Dioph'(C)\right)=\mu\left(\Dioph''(C)\cap\Dioph(C)\right)> \mu\left(\Dioph''(C)\right)+\mu\left(\Dioph(C)\right)-1 > 1-4\epsilon+2\epsilon^2-\epsilon > 1-6\epsilon. \qed\]
\end{proof}

\textbf{Proof of the proposition.} Let \boldmath$\eta$\unboldmath $:[0,1]\to[0,1]$ be defined as \boldmath $\eta(r)$ \unboldmath $:=\sup\{\mu\Para(f)\ :\ f\in\Family,\ \|f_t\|_{\Family}<r\}$ for $\forall r\in[0,1]$.

Note that $\eta$ is a non-decreasing function of $r$. Moreover, $\eta(0)=0, \eta(1)=1$. Define the universal constant \boldmath $r_*$ \unboldmath as $r_*:=\inf\{r\in[0,1]\ :\ \eta(r)=1\}$. Lemma \myref{lem:con_at_Id} proves that $r_*>0$. Proposition 6.2. of \cite{Herman1} establishes a bound $\mu(\Para(f_t))<1-\delta$, where $\delta$ depends only on $|\partial_t g_t|$ , which is $<\|f_t\|_{\Family}$. So $r^*=1$. Thus we have proved the existence of the constant $\eta\in(0,1)$ claimed in Proposition \myref{lem:unif_bound_1}, for the case when $\wind(f_t)=1$. We will extend to the general case below.

Given $R\in(0,1)$, let $\eta=\eta(R)$. Therefore, $\eta\in(0,1)$. Let $f\in\Family$ be of the form $f_t:\theta\mapsto\theta+nt+g(\theta,t)\bmod 1$, where $|\partial_t g|<R$ and $|\partial_\theta g)|<R$. Let $t$ be renormalized over the interval $[0,N]$. We will define $n$ parameterized families $F_k$ of circle diffeomorphisms, for each $k\in[n]$.
\[F_{k,t}(\theta)=\theta +t+G_k(\theta, t)\bmod 1,\ \ \ \mbox{ where } G_k(\theta, t)= g(\theta,\frac{t+k}{n}).\]
Note that $|\partial_t G_k|<R$ and $|\partial_\theta G_k)|<R/N<R$. Therefore, for $\forall k\in[n]$, $\mu\Para(F_{k})<\eta$. Therefore, 
\[\mu\Para(f)=\frac{1}{n}\underset{k\in[n]}{\Sigma}\mu\Para(F_k)<\eta.\qed\]

This completes the proof of Proposition \myref{lem:unif_bound_1} and therefore of Theorem \myref{thm:A}. \qed

\bibliographystyle{unsrt}
\bibliography{1D_bibliography}
\end{document}